\def\ZZ{{\mathbb Z}}
\def\Z{{\mathcal Z}}
\def\Ker{{\rm Ker}}
\newtheorem{theorem}{Theorem}
\newtheorem{lemma}[theorem]{Lemma}
\begin{document}
\title{Markov bases of binary graph models of $K_4$-minor free graphs}
\author{Daniel Kr{\'a}l'\thanks{Institute for Theoretical Computer Science (ITI), Faculty of Mathematics and Physics, Charles University, Malostransk\'e n\'am\v est\'\i{} 25, Prague, Czech Republic. E-mail: {\tt kral@kam.mff.cuni.cz}. Institute for Theoretical Computer Science is supported by the Ministry of Education of the Czech Republic as project 1M0545.}\and
        Serguei Norine\thanks{Department of Mathematics, Princeton University, Princeton, NJ 08540-1000. E-mail: {\tt snorin@math.princeton.edu}. The author was supported in part by NSF under Grant No. DMS-0701033.}\and
        Ond{\v r}ej Pangr{\'a}c\thanks{Department of Applied Mathematics, Faculty of Mathematics and Physics, Charles University, Malostransk\'e n\'am\v est\'\i{} 25, Prague, Czech Republic. E-mail: {\tt pangrac@kam.mff.cuni.cz}}}
\date{}
\maketitle

\begin{abstract}
Markov width of a graph is a graph invariant defined as the maximum
degree of a Markov basis element for the corresponding graph model for binary contingency
tables. We show that a graph has Markov
width at most four if and only if it contains no $K_4$ as a minor, answering a question of Develin and Sullivant.
We also present a lower bound of order $\Omega(n^{2-\varepsilon})$
on the Markov width of $K_n$.
\end{abstract}

\section{Introduction}

A \emph{contingency table} is a $d_1 \times d_2 \times \cdots \times d_n$ array of non-negative integers. Contingency tables are used to record and analyze the relationship between $n$ discrete random variables $X_1, X_2, \ldots, X_n$, where $X_k$ takes values in the set $[d_k]=\{1,2,\ldots,d_k\}$ for $k=1,2,\ldots, n$. In \emph{hierarchial models} a simplicial complex $\Delta$ on $\{1,\ldots,n\}$ encodes interactions between the variables. One can estimate how well  empirical data in the given table fits a hierarchial model by comparing statistics of this table with statistics of a random non-negative integral table with the same set of marginals. In~\cite{bib-diaconis98+} an algebraic approach for generating such a random table has been presented. This approach can be informally summarized as follows. A finite set of \emph{moves}, such that any two tables with same set of marginals are connected by a sequence of such moves, is computed. Such a set of moves is called a \emph{Markov basis}. Given a Markov basis, a random table is generated by performing a random walk using the moves in the basis.

Thus, description of Markov bases of a given model is of interest and has attracted attention of researchers in recent years. For a more detailed introduction see~\cite{bib-develin03+,bib-hosten07+}. In this paper, following~\cite{bib-develin03+}, we concentrate our attention on binary graph models, i.e. hierarchial models of  $2\times\cdots\times 2$-contingency tables, where the simplicial complex $\Delta$, which encodes the variable interactions, is a graph.

Let us now give a formal definition of a Markov basis of a binary graph model. For a finite set $X=\{x_1,x_2, \ldots, x_n\}$ let $\Z(X) \simeq \ZZ^{2^n}$ denote the integral lattice with an orthonormal basis $e_{a(x_1)a(x_2)\ldots a(x_n)}$ indexed by binary labeling $a: X \to \{0,1\}$ of $X$. For a subset $Y \subset X$ there exists a natural projection map $\pi_{X \to Y}: \Z(X) \to \Z(Y)$ defined as the linear extension of the mapping $e_{a} \to e_{a|_Y}$. In most cases  the original set $X$ will be understood from context and we will frequently abbreviate $\pi_{X \to Y}$ to $\pi_Y$. Similarly, we will abbreviate $\pi_{\{a,b\}}$ to $\pi_{ab}$.

Given a graph $G$ let $$\pi_G:\Z(V(G))\to \bigotimes_{e \in E(G)}\Z(V(e))$$ be a linear mapping obtained as the product of the maps $\pi_{ij}$, $ij\in E$. The mappings $\pi_{ij}$ correspond to $2$-way marginals of a $2\times\cdots\times 2$ table.
A finite subset $B\subseteq\Ker(\pi_G)$ is a {\em Markov basis} if for every non-negative integral vectors $v_1,v_2\in\Z(V(G))$
with $\pi_G(v_1)=\pi_G(v_2)$, there exist $u_1,\ldots,u_\ell\in\pm B$
satisfying the following:
$$v_1+\sum_{k=1}^{\ell}u_k=v_2\qquad\mbox{and}\qquad
  v_1+\sum_{k=1}^{\ell'}u_k \;\mbox{ is non-negative for all $1\le\ell'\le\ell$.}$$
The \emph{Markov width} $\mu(G)$ of $G$ is the smallest integer $k$
such that there exists a Markov base $B$ of $G$ with $||v||_1\le 2k$
for every $v\in B$ where $||v||_1$ is the $\ell_1$-norm of $v$. The motivation for considering Markov width as the measure of complexity of the set of Markov bases of a binary graph model comes from the fact that a binary model can be alternatively defined as a binomial ideal. In this setting Markov bases correspond to generating sets of the ideal, and Markov width equals to the degree of the largest minimal
generator of the ideal. We omit the precise definitions, as we do not make use of this reinterpretation in our arguments, and refer the reader to~\cite{bib-develin03+,bib-sturmfels+} for details.

It is known~\cite{bib-dobra,bib-geiger+,bib-takken} that $\mu(G) = 2$ if and only $G$ is a forest, and that $\mu(G) \geq 4$, otherwise. In this paper we characterize graphs with $\mu(G)=4$, answering a question of Develin and Sullivant~\cite{bib-develin03+}. We show that $\mu(G) \leq 4$ if and only if $G$ does not contain a subdivision of the complete graph $K_4$ as a subgraph. Additionally, we investigate the Markov width of complete graphs.
Develin and Sullivant~\cite{bib-develin03+} showed that the Markov width $\mu(K_n)$ of the complete graph on $n$ vertices is lower bounded
by $2n-2$. We strengthen this lower bound, showing that $\mu(K_n)\ge\Omega(n^{2-\varepsilon})$ for every $\varepsilon>0$.

\section{Markov width of $K_4$-free graphs}

We start by describing a standard construction used in inductive arguments on $K_4$-minor free graphs.
{\em Series-parallel graphs} are graphs with two distinguished vertices called {\em poles}, obtained from elementary graphs
by a recursive construction.
The simplest series-parallel graph is
an edge $uv$ with the two poles being its end-vertices.
If $G_1$ and $G_2$ are series-parallel graphs with poles
$u_1$ and $v_1$, and $u_2$ and $v_2$, respectively, then
the graph $G$ obtained by identifying the vertex $v_1$ with $u_2$
is also a series-parallel graph and its two poles are the vertices
$u_1$ and $v_2$. The graph $G$ obtained in this way is called
the {\em serial join} of $G_1$ and $G_2$. The {\em parallel join}
of $G_1$ and $G_2$ is the graph obtained by identifying the
vertex $u_1$ with $u_2$ and the vertex $v_1$ with $v_2$;
the poles are the identified vertices. The {\em series-parallel graphs}
are precisely those that can be obtained from edges by a series
of serial and parallel joins. The sequence of such joins
leading to a construction of a graph $G$
is called a {\em series-parallel decomposition} of $G$.
The series-parallel decomposition of $G$ is not unique.

In our considerations, we will need the following (folklore) lemma.

\begin{lemma}
\label{lm-sp}
Every $2$-connected $K_4$-minor free graph is a series-parallel graph.
If $G$ is a $2$-connected $K_4$-minor free graph that is not a cycle,
then there exists a series-parallel decomposition of $G$ such that $G$
is obtained by a parallel join of at least three series-parallel graphs.
\end{lemma}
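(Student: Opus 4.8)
The plan is to handle the two assertions separately, each by induction on the number of edges. The first is the classical Duffin characterisation of series-parallel graphs, and I would prove it in the slightly strengthened form $(\star)$: \emph{every $2$-connected $K_4$-minor free graph $H$ on at least three vertices is, for each edge $uv\in E(H)$, series-parallel with poles $u$ and $v$.} The base case is the triangle. For the step, note first that if the desired poles $u,v$ are non-adjacent then $H+uv$ is again $2$-connected and $K_4$-minor free (a $K_4$-minor using the new edge reroutes it along a $u$-$v$ path of $H$), so we may assume $uv\in E(H)$ and $|V(H)|\ge 4$. A $K_4$-minor free graph contains no subdivision of $K_4$, so by the classical theorem of Dirac it is not $3$-connected; hence $H$ has a $2$-cut $\{p,q\}$. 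Let $C_1,\dots,C_m$ ($m\ge 2$) be the components of $H-\{p,q\}$ and $B_i := H[C_i\cup\{p,q\}]+pq$. Each $B_i$ is $2$-connected, contains $pq$, has fewer edges, and is a minor of $H$ (contract a $p$-$q$ path through the other components), hence $K_4$-minor free; by induction $B_i$ is series-parallel with poles $p,q$, and so is $B_i\setminus pq$ since the edge $pq$ forms a top-level parallel branch. Thus $H$ is the parallel join of the graphs $B_i\setminus pq$, together with the edge $pq$ if $pq\in E(H)$, and in particular it is series-parallel. To force the poles to be the prescribed $u,v$: either $\{u,v\}=\{p,q\}$, or the edge $uv$ lies inside a unique bridge $B_{i_0}$, which by induction is series-parallel with poles $u,v$; substituting for its edge $pq$ the parallel join of the remaining pieces recovers $H$, and since substituting a series-parallel graph into an edge of a series-parallel graph yields a series-parallel graph, this exhibits $H$ with poles $u,v$.

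For the second assertion I would prove by induction on $|E(G)|$ that a $2$-connected $K_4$-minor free graph $G$ that is not a cycle has a $2$-cut $\{p,q\}$ whose number of \emph{parallel pieces} --- the number of components of $G-\{p,q\}$, plus one if $pq\in E(G)$ --- is at least three. This suffices: each such piece is either the edge $pq$ or a graph $G[C\cup\{p,q\}]\setminus pq$ which, upon re-adding $pq$, is a $2$-connected $K_4$-minor free minor of $G$, hence series-parallel with poles $p,q$ by the first assertion, and remains so after deleting the top-level branch $pq$; so $G$ is a parallel join of at least three series-parallel graphs with poles $p,q$. Such a $G$ has at least four vertices (a $2$-connected graph on three vertices is a triangle), hence some $2$-cut $\{u,v\}$. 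If $G-\{u,v\}$ has at least three components, or has two components with $uv\in E(G)$, then $\{u,v\}$ already works. Otherwise $G-\{u,v\}$ has exactly two components $C_1,C_2$, $uv\notin E(G)$, and $G=B_1\cup B_2$ with $B_i=G[C_i\cup\{u,v\}]$. If both $B_i$ were $u$-$v$ paths then $G$ would be a cycle; so, say, $B_1$ is not, and then $B_1':=B_1+uv$ is $2$-connected, $K_4$-minor free, not a cycle, and has fewer edges than $G$. By induction $B_1'$ has a $2$-cut $\{p,q\}$ with at least three parallel pieces, and $\{p,q\}\ne\{u,v\}$ because $B_1'-\{u,v\}=G[C_1]$ is connected, which would force only two pieces.

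It then remains to transfer $\{p,q\}$ from $B_1'$ to $G$. Since $\{p,q\}\ne\{u,v\}$, the edge $uv$ of $B_1'$ lies inside one of its parallel pieces at $\{p,q\}$, and $G$ arises from $B_1'$ by replacing that edge with the bridge $B_2$, which meets the rest of $G$ only in $u$ and $v$ --- both already in that piece. A short check shows this alters neither the number of components of the complement of $\{p,q\}$ nor whether $pq$ is an edge, so $\{p,q\}$ is a $2$-cut of $G$ with at least three parallel pieces, closing the induction. I expect this last step --- especially the sub-case where $\{p,q\}$ meets $\{u,v\}$ and one must track how $B_2$ re-attaches through the surviving pole --- to be the only genuinely delicate point; everything else is classical or a direct unwinding of the definitions.
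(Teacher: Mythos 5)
The paper offers no proof of this lemma at all --- it is stated as folklore --- so there is nothing of the authors' to compare your argument against; I assess it on its own terms. Your strategy is the standard one (Dirac's theorem supplies a $2$-cut $\{p,q\}$ of any $2$-connected $K_4$-minor-free graph on at least four vertices; each bridge of the cut with the virtual edge $pq$ added is a smaller $2$-connected $K_4$-minor-free graph, since a $p$--$q$ path through another bridge can be contracted onto $pq$; and the second assertion follows by pushing a ``bad'' $2$-cut with only two parallel pieces into one of its bridges). The skeleton is sound, and I checked the transfer step you flag as delicate: it does go through in both cases ($\{p,q\}\cap\{u,v\}=\emptyset$ and $|\{p,q\}\cap\{u,v\}|=1$), essentially because $B_2$ reattaches inside a single component of $B_1'-\{p,q\}$, or hangs off a surviving pole into such a component, so the component count and the presence of the edge $pq$ are both preserved.

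There is, however, one genuinely false claim: ``if the desired poles $u,v$ are non-adjacent then $H+uv$ is again $2$-connected and $K_4$-minor free.'' This fails already for $H=K_{2,3}$ with $u,v$ two vertices of the $3$-element side: contracting the edge from the third vertex of that side to one vertex of the $2$-element side turns $K_{2,3}+uv$ into $K_4$. Correspondingly, $K_{2,3}$ is series-parallel with the two degree-$3$ vertices as poles but \emph{not} with $u,v$ as poles, so the implicit stronger statement ``series-parallel with any prescribed pair of poles'' is false, and a reduction from non-adjacent to adjacent poles by edge addition cannot work. Fortunately the sentence is never used: your hypothesis $(\star)$ is stated only for adjacent poles, and every recursive call applies it to the edge $pq$ that you explicitly add across a $2$-cut, where $K_4$-minor-freeness is justified by the correct observation that $B_i+pq$ is a minor of $H$. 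Delete that sentence. With it gone, and with a one-line proof of the auxiliary fact that deleting the edge $pq$ from a series-parallel graph with poles $p,q$ and at least two edges preserves series-parallelness with the same poles (induct on the decomposition: a serial join is impossible in the presence of the edge $pq$, so peel off parallel branches until $pq$ is exposed), the argument is complete.
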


\noindent Observe that if the last operation in a series-parallel decomposition of $G$
is a parallel join, then $G$ is $2$-connected (the converse is also true). In particular,
we can apply Lemma~\ref{lm-sp} to such a graph $G$.

Finally, following standard graph theory terminology, we define a {\em $\{u,v\}$-bridge} $B$ of $G$
to be a connected subgraph of $G$ such that either $E(B)=\{uv\}$ or
for some component $C$ of $G\setminus \{u,v\}$ the set $E(B)$ consists of all edges of $G$ with at least one
end in $V(C)$. Note that a series parallel-graph $G$ with poles $u$ and $v$ can be obtained from the set of its $\{u,v\}$-bridges by a sequence of parallel joins.

In this section we characterize graphs with Markov width equal to four.
The cycles have this property, as shown in~\cite{bib-develin03+}.

\begin{lemma}[\cite{bib-develin03+}]
\label{lm-cycle}
If $G$ is a cycle then $\mu(G)=4$.
\end{lemma}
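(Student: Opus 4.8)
\emph{The lower bound.} A cycle is not a forest, so $\mu(G)\ge 4$ by the results quoted above~\cite{bib-dobra,bib-geiger+,bib-takken}; the work is in the upper bound $\mu(G)\le 4$, i.e.\ in producing a Markov basis $B$ with $||u||_1\le 8$ for every $u\in B$. I would use the standard distance-reduction scheme: it is enough to prove that whenever $v_1\ne v_2$ are non-negative integral vectors with $\pi_G(v_1)=\pi_G(v_2)$, there is $u\in\Ker(\pi_G)$ with $||u||_1\le 8$, with $v_1+u\ge 0$, and with $||v_1+u-v_2||_1<||v_1-v_2||_1$. Granting this, let $B$ be the (finite) set of all elements of $\Ker(\pi_G)$ of $\ell_1$-norm at most $8$; induction on $||v_1-v_2||_1$ then shows that $B$ connects any two non-negative vectors with the same $\pi_G$-image through non-negative vectors, so $B$ is a Markov basis and $\mu(G)\le 4$.

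\emph{Reformulation.} Put $V(G)=\{1,\dots,n\}$ in cyclic order and identify $e_a$ with the cyclic binary word $a_1a_2\cdots a_n$, so that a non-negative integral vector is a finite multiset of such words. The map $\pi_G$ records, for every edge $\{i,i+1\}$ (indices modulo $n$), the multiset of pairs $(a_i,a_{i+1})$ taken over the words $a$ of the multiset; hence $\pi_G(v_1)=\pi_G(v_2)$ says exactly that the multiset $P$ of words of $v_1$ and the multiset $Q$ of words of $v_2$ have the same multiset of consecutive pairs at each position. The vector $u$ sought above is then a recipe to delete some words of $P$, insert the same number of new words, leave every per-position pair-multiset unchanged, and move $P$ strictly closer to $Q$; the constraint $v_1+u\ge 0$ says precisely that the deleted words really occur in $P$ --- which is the only reason a direct ``one-arc correction'' can fail, and the reason degree-$4$ moves are needed.

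\emph{The moves.} Two families should suffice. (i) \emph{Arc swaps}, of degree $2$: if two words $c,c'$ of $P$ agree in two distinct positions $i,j$ and differ, along both of the arcs joining $i$ and $j$, somewhere strictly between $i$ and $j$, then interchanging the portions of $c$ and $c'$ lying on one of these two arcs yields words $d,d'$ with $e_c+e_{c'}-e_d-e_{d'}\in\Ker(\pi_G)$ of $\ell_1$-norm $4$; this move is automatically applicable to $v_1$, since $c$ and $c'$ both occur in $P$. Starting from a word that occurs more often in $P$ than in $Q$ and using $\pi_G(v_1)=\pi_G(v_2)$ to locate the needed second word and the positions of agreement, one arranges such a swap so that it brings $P$ strictly closer to $Q$. (ii) \emph{Quartics}, of degree $4$: there are ``stuck'' configurations --- the prototype being the $K_3$-fibre $P=\{000,011,101,110\}$, $Q=\{001,010,100,111\}$, in which every two words of $P$ agree in exactly one position and no arc swap does anything --- where instead one rewires \emph{four} words of $P$ at once, the model move being $e_{000}+e_{011}+e_{101}+e_{110}-e_{001}-e_{010}-e_{100}-e_{111}$, of $\ell_1$-norm $8$; since the four deleted words lie in $P$, this move, too, is applicable. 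In general, when no arc swap makes progress, the region in which $P$ disagrees with $Q$ is forced to look locally like a copy of this $K_3$-fibre, and a quartic of the same shape applies and decreases the distance to $Q$.

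\emph{The main obstacle.} The heart of the proof --- and essentially the only place the cyclic structure is used --- is to show that these two families already suffice: that there is no pair $(P,Q)$ for which simultaneously every arc swap fails to make progress and no admissible quartic exists. I expect this to come down to a case analysis of $P$ and $Q$ near their difference region, exploiting that every edge of $C_n$ lies on the one cycle and that every word has an even number of positions $i$ with $a_i\ne a_{i+1}$; phrased invariantly, it is the assertion that the binary graph model of $C_n$ admits a Markov basis built from arc swaps and quartics as above, all of degree at most $4$. This is the result established by Develin and Sullivant~\cite{bib-develin03+}. Degree $4$ is attained already by the quartic displayed above, so altogether $\mu(G)=4$.
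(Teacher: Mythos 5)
The paper does not prove Lemma~\ref{lm-cycle} at all: it is imported verbatim from Develin and Sullivant~\cite{bib-develin03+}, so there is no internal argument to compare yours against. Your surrounding scaffolding is sound --- the lower bound from the quoted forest characterization, the reduction of $\mu(G)\le 4$ to ``from any $v_1\ne v_2$ in the same fiber there is a non-negativity-preserving move of $\ell_1$-norm at most $8$ that decreases $||v_1-v_2||_1$,'' the reformulation as multisets of cyclic binary words, and the verification that arc swaps and the displayed quartic lie in $\Ker(\pi_G)$ are all correct.

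The gap is that the one step which \emph{is} the lemma --- that these two families of moves always suffice to make progress --- is not argued. You state it as an expectation, assert without justification that every configuration in which no arc swap helps ``is forced to look locally like a copy of the $K_3$-fibre,'' and then attribute the conclusion to~\cite{bib-develin03+}. That structural claim is itself doubtful for $n>3$: the degree-$4$ elements of the Markov basis of $C_n$ are not local copies of the triangle move but involve four words whose pairwise agreements are interleaved around the whole cycle, so the case analysis you defer is genuinely the hard part and not a routine verification. If your intent is simply to cite Develin--Sullivant for the lemma, that matches the paper exactly and the sketch is harmless but adds nothing; if your intent is a self-contained proof, the sufficiency argument is missing and must either be carried out (a nontrivial combinatorial induction on the disagreement region) or honestly replaced by the citation alone.
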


Throughout the proof of the main theorem of this section,
we will use the following straightforward observations repeatedly, and so we state them as lemmas.
For convenience, let $\Z(\emptyset)=\ZZ$ and $\pi_{X \to \emptyset}(z)=||z||_1$ for $z \in \Z(X)$.

\begin{lemma}
\label{lm-glue-cutsame}
Let $X_1$ and $X_2$ be finite sets, let $X = X_1\cup X_2$ and $Y=X_1\cap X_2$.
Let $z\in\Z(X)$ and $\overline{z}\in\Z(X_1)$ such that $\pi_{X\to Y}(z)=\pi_{X_1\to Y}(\overline{z})$.
Then there exists $z'\in\Z(X)$ such that $\pi_{X\to X_1}(z')=\overline{z}$,
$\pi_{X\to X_2}(z')=\pi_{X\to X_2}(z)$ and $||z-z'||_1=||\pi_{X\to X_1}(z)-\overline{z}||_1$.
\end{lemma}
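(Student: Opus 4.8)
The plan is to choose coordinates that make all three projections explicit and then to correct the discrepancy between $\pi_{X\to X_1}(z)$ and $\overline z$ one unit at a time. Write $A=X_1\setminus Y$ and $B=X_2\setminus Y$, so that $X=Y\sqcup A\sqcup B$ and every labeling $a\colon X\to\{0,1\}$ splits into a triple $(a_Y,a_A,a_B)$ of labelings of $Y$, $A$, $B$. In these coordinates the entry of $\pi_{X\to X_1}(z)$ at $(a_Y,a_A)$ equals $\sum_{a_B}z_{(a_Y,a_A,a_B)}$, the entry of $\pi_{X\to X_2}(z)$ at $(a_Y,a_B)$ equals $\sum_{a_A}z_{(a_Y,a_A,a_B)}$, and the entry of $\pi_{X\to Y}(z)$ at $a_Y$ equals $\sum_{a_A,a_B}z_{(a_Y,a_A,a_B)}$; thus the hypothesis $\pi_{X\to Y}(z)=\pi_{X_1\to Y}(\overline z)$ becomes $\sum_{a_A,a_B}z_{(a_Y,a_A,a_B)}=\sum_{a_A}\overline z_{(a_Y,a_A)}$ for every $a_Y$.

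I would then induct on $\delta:=\|\pi_{X\to X_1}(z)-\overline z\|_1$. If $\delta=0$, take $z'=z$. Otherwise $\pi_{X\to X_1}(z)-\overline z$ is nonzero, and by the translated hypothesis each of its ``$a_Y$-blocks'' $\big(\sum_{a_B}z_{(a_Y,a_A,a_B)}-\overline z_{(a_Y,a_A)}\big)_{a_A}$ sums to zero; pick an $a_Y$ whose block is nonzero and, in it, an index $a_A$ with positive entry and an index $a_A^\ast$ with negative entry. Fix any labeling $a_B$ of $B$ and set
$$z_1=z-e_{(a_Y,a_A,a_B)}+e_{(a_Y,a_A^\ast,a_B)}.$$
In each touched fibre of $\pi_{X\to X_2}$ and of $\pi_{X\to Y}$ one summand drops by one and another rises by one, so $\pi_{X\to X_2}(z_1)=\pi_{X\to X_2}(z)$ and $\pi_{X\to Y}(z_1)=\pi_{X\to Y}(z)$; meanwhile $\pi_{X\to X_1}(z_1)-\overline z$ agrees with $\pi_{X\to X_1}(z)-\overline z$ except that its $a_A$- and $a_A^\ast$-entries each move one step toward $0$, whence $\|\pi_{X\to X_1}(z_1)-\overline z\|_1=\delta-2$. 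Since $\pi_{X\to Y}(z_1)=\pi_{X_1\to Y}(\overline z)$ still holds, the inductive hypothesis applied to $z_1$ yields $z'$ with $\pi_{X\to X_1}(z')=\overline z$, $\pi_{X\to X_2}(z')=\pi_{X\to X_2}(z_1)=\pi_{X\to X_2}(z)$, and $\|z_1-z'\|_1=\delta-2$; then $\|z-z'\|_1\le\|z-z_1\|_1+\|z_1-z'\|_1\le 2+(\delta-2)=\delta$.

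For the matching lower bound on $\|z-z'\|_1$ I would use that a coordinate projection never increases the $\ell_1$-norm: for any $w\in\Z(X)$, $\|\pi_{X\to X_1}(w)\|_1=\sum_{(a_Y,a_A)}\big|\sum_{a_B}w_{(a_Y,a_A,a_B)}\big|\le\sum_{(a_Y,a_A,a_B)}|w_{(a_Y,a_A,a_B)}|=\|w\|_1$. Taking $w=z-z'$ gives $\delta=\|\pi_{X\to X_1}(z)-\overline z\|_1\le\|z-z'\|_1$, so equality holds. I do not expect a genuine obstacle here; the only points to verify carefully are that $\delta$ is even (each $a_Y$-block of $\pi_{X\to X_1}(z)-\overline z$ has zero sum, so its $\ell_1$-norm is twice the sum of its positive entries), which makes the induction well-founded, and the degenerate case $B=\emptyset$, where $X=X_1$ and $X_2=Y$ and one may simply take $z'=\overline z$ — though the induction above also covers it, using the unique empty labeling of $B$.
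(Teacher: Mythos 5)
Your proof is correct. The paper states Lemma~\ref{lm-glue-cutsame} without proof, as one of several ``straightforward observations,'' so there is no argument of the authors to compare against; your decomposition $X=Y\sqcup A\sqcup B$, the unit-by-unit correction of the $a_Y$-blocks of $\pi_{X\to X_1}(z)-\overline{z}$ (each of which sums to zero by the hypothesis $\pi_{X\to Y}(z)=\pi_{X_1\to Y}(\overline{z})$), and the observation that coordinate projections do not increase the $\ell_1$-norm, which turns the upper bound into the claimed equality, together constitute a complete and valid verification of exactly the kind the authors left to the reader.
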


\begin{lemma}
\label{lm-glue-swaps}
Let $X_1$ and $X_2$ be finite sets and let $X = X_1 \cup X_2$.
Let $z,z' \in \Z(X)$ be such that $\pi_{X \to X_i}(z)=\pi_{X \to X_i}(z')$ for $i=1,2$.
Then there exist vectors $z=z_0,z_1,z_2,\ldots,z_{\ell}=z' \in \Z(X)$ such that $||z_k-z_{k-1}||_1 = 4$ and $\pi_{X \to X_i}(z_{k-1})=\pi_{X \to X_i}(z_k)$
for $i=1,2$ and $k=1,\ldots,\ell$.
Moreover, if $z$ and $z'$ are non-negative, then $z_0,z_1,\ldots,z_{\ell}$ can be chosen to be non-negative.
\end{lemma}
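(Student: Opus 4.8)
The plan is to reduce the statement to a classical fact about integer matrices with prescribed row and column sums. Write $A=X_1\setminus X_2$, $C=X_2\setminus X_1$ and $Y=X_1\cap X_2$, so that every binary labeling $a\colon X\to\{0,1\}$ decomposes as a triple $(\alpha,\beta,\gamma)$ with $\alpha\colon A\to\{0,1\}$, $\beta\colon Y\to\{0,1\}$ and $\gamma\colon C\to\{0,1\}$; grouping the basis vectors of $\Z(X)$ by their $Y$-part $\beta$ splits $\Z(X)$ into blocks, each isomorphic to $\Z(A\cup C)$, and the maps $\pi_{X\to X_1}$ and $\pi_{X\to X_2}$ (which merely forget $\gamma$, resp.\ $\alpha$) respect this splitting. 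For a fixed $\beta$, record the $\beta$-block of a vector $z$ as the integer matrix $M^{\beta}_z$ whose rows are indexed by labelings $\alpha$ of $A$, whose columns are indexed by labelings $\gamma$ of $C$, and whose $(\alpha,\gamma)$-entry is $z_{(\alpha,\beta,\gamma)}$. Then the coordinate of $\pi_{X\to X_1}(z)$ at $(\alpha,\beta)$ is the sum of row $\alpha$ of $M^{\beta}_z$, and the coordinate of $\pi_{X\to X_2}(z)$ at $(\beta,\gamma)$ is the sum of column $\gamma$ of $M^{\beta}_z$. Hence the hypothesis $\pi_{X\to X_i}(z)=\pi_{X\to X_i}(z')$ for $i=1,2$ says precisely that for every $\beta$ the matrices $M^{\beta}_z$ and $M^{\beta}_{z'}$ have equal row sums and equal column sums.

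The moves I will use are the \emph{swaps}
$$u=e_{(\alpha,\beta,\gamma)}+e_{(\alpha',\beta,\gamma')}-e_{(\alpha,\beta,\gamma')}-e_{(\alpha',\beta,\gamma)},\qquad\alpha\neq\alpha',\ \gamma\neq\gamma'.$$
Such a $u$ lies in a single block (all four labelings share the $Y$-part $\beta$), satisfies $||u||_1=4$ since its four nonzero coordinates are distinct, and on the matrix $M^{\beta}$ it acts by adding $1$ to the entries $(\alpha,\gamma)$ and $(\alpha',\gamma')$ and subtracting $1$ from the entries $(\alpha,\gamma')$ and $(\alpha',\gamma)$. This operation preserves every row sum and every column sum of $M^{\beta}$ and leaves all other blocks untouched, so $\pi_{X\to X_1}$ and $\pi_{X\to X_2}$ are unchanged along any sequence built from such moves, exactly as the lemma requires.

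It remains to prove that, inside one block, any two integer matrices $M$ and $N$ with equal row sums and equal column sums can be joined by a sequence of $2\times 2$ moves of the above type, with all intermediate matrices non-negative when $M$ and $N$ are. I will induct on $||M-N||_1$. If it is positive then $M\neq N$, and since equality of the row sums forces $\sum_{\alpha,\gamma}M_{\alpha,\gamma}=\sum_{\alpha,\gamma}N_{\alpha,\gamma}$, there is a cell $(\alpha,\gamma)$ with $M_{\alpha,\gamma}>N_{\alpha,\gamma}$. Equality of the sum of row $\alpha$ then yields some $\gamma'\neq\gamma$ with $M_{\alpha,\gamma'}<N_{\alpha,\gamma'}$, and equality of the sum of column $\gamma'$ yields some $\alpha'\neq\alpha$ with $M_{\alpha',\gamma'}>N_{\alpha',\gamma'}$. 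Apply the swap that subtracts $1$ from $(\alpha,\gamma)$ and $(\alpha',\gamma')$ and adds $1$ to $(\alpha,\gamma')$ and $(\alpha',\gamma)$. At $(\alpha,\gamma)$, $(\alpha,\gamma')$ and $(\alpha',\gamma')$ the move decreases the contribution to $||M-N||_1$ by exactly $1$, while at $(\alpha',\gamma)$ it changes that contribution by exactly $1$; hence $||M-N||_1$ drops by $2$ or $4$. Moreover the only entries decreased are $M_{\alpha,\gamma}$ and $M_{\alpha',\gamma'}$, which exceed $N_{\alpha,\gamma}\ge 0$ and $N_{\alpha',\gamma'}\ge 0$, hence are at least $1$, so the new matrix stays non-negative in the non-negative case. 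By induction each block is handled, and concatenating the sequences produced for the finitely many blocks $\beta$ (in any order, keeping the other blocks fixed at their values in $z$ or $z'$) yields the desired $z=z_0,z_1,\ldots,z_\ell=z'$.

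The one point that needs checking is that the ``wrong-direction'' cell $(\alpha',\gamma)$ cannot spoil the induction: it contributes at most $+1$ to $||M-N||_1$, which is outweighed by the $-1$ coming from each of the other three cells, and in the non-negative case the cells that get decremented are exactly those where $M$ strictly exceeds $N$, which is what preserves non-negativity. Degenerate situations are covered automatically: if $A=\emptyset$ or $C=\emptyset$ the block matrices have a single row or a single column and the sum conditions already force $M=N$, and if $Y=\emptyset$ there is just one block.
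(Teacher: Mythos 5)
The paper states this lemma without proof, listing it among ``straightforward observations'', so there is no proof of record to compare against; your argument is correct and complete. It is the standard one --- decompose $\Z(X)$ into blocks indexed by the labelings of $X_1\cap X_2$ and connect integer matrices with fixed row and column sums by basic $2\times 2$ swaps, with the induction on $\|M-N\|_1$, the sign analysis at the fourth cell, non-negativity, and the degenerate cases all handled correctly --- and is surely what the authors intended.
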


\begin{lemma}
\label{lm-glue-cutchange}
Let $X_1$ and $X_2$ be finite sets, let $X = X_1\cup X_2$ and $Y=X_1\cap X_2$.
Let $z_1,z'_1\in\Z(X_1)$ and $z_2,z'_2\in\Z(X_2)$ such that $\pi_{X_1\to Y}(z_1)=\pi_{X_2\to Y}(z_2)$ and
$\pi_{X_1\to Y}(z'_1)=\pi_{X_2\to Y}(z'_2)$.
If $$||\pi_{X_1\to Y}(z_1)-\pi_{X_1\to Y}(z'_1)||_1=||z_1-z'_1||_1=||z_2-z'_2||_1\;\mbox{,}$$
then there exist $z,z'\in\Z(X)$ such that $\pi_{X\to X_i}(z)=z_i$, $\pi_{X\to X_i}(z')=z'_i$ and
$||z-z'||_1=||z_1-z'_1||_1$.
\end{lemma}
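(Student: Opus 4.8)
The plan is to argue in explicit coordinates. Write each binary labeling $a\colon X\to\{0,1\}$ as a triple $(d,y,c)$, where $d$, $y$, $c$ are the restrictions of $a$ to $X_1\setminus Y$, to $Y$, and to $X_2\setminus Y$ respectively; then a vector $z\in\Z(X)$ is recorded by its coordinates $z[d,y,c]$, a vector in $\Z(X_1)$ by coordinates indexed by pairs $(d,y)$, a vector in $\Z(X_2)$ by pairs $(y,c)$, and $\pi_{X\to X_1}(z)[d,y]=\sum_c z[d,y,c]$, $\pi_{X\to X_2}(z)[y,c]=\sum_d z[d,y,c]$, $\pi_{X\to Y}(z)[y]=\sum_{d,c}z[d,y,c]$. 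For a fixed $y$ the slice $z[\cdot,y,\cdot]$ is just an integer matrix whose row sums and column sums are precisely the $y$-slices of $\pi_{X\to X_1}(z)$ and of $\pi_{X\to X_2}(z)$, the common value of the two total sums being $\pi_{X\to Y}(z)[y]$. Slices for distinct $y$ are independent, so the construction can be done one value of $y$ at a time.

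Set $w_i=z_i-z_i'$ for $i=1,2$ and $\omega=\pi_{X_1\to Y}(w_1)=\pi_{X_2\to Y}(w_2)$; the hypothesis says exactly $\|w_1\|_1=\|w_2\|_1=\|\omega\|_1$. I would construct $z$ together with $w:=z-z'$ and then put $z'=z-w$. It suffices to produce, for every $y$, an integer matrix $M_y$ with row sums $z_1[\cdot,y]$ and column sums $z_2[y,\cdot]$ together with an integer matrix $N_y$ with row sums $w_1[\cdot,y]$ and column sums $w_2[y,\cdot]$, and to declare $z[\cdot,y,\cdot]=M_y$ and $z'[\cdot,y,\cdot]=M_y-N_y$: the prescribed row and column sums then give $\pi_{X\to X_i}(z)=z_i$ and $\pi_{X\to X_i}(z')=z_i'$ for $i=1,2$. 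The matrix $M_y$ exists because $\pi_{X_1\to Y}(z_1)=\pi_{X_2\to Y}(z_2)$ makes its margins consistent (explicitly, concentrate the mass in a single row and a single column). The one nontrivial point is to choose $N_y$ with $\|N_y\|_1=|\omega[y]|$; granting that, $\|z-z'\|_1=\sum_y\|N_y\|_1=\sum_y|\omega[y]|=\|\omega\|_1=\|z_1-z_1'\|_1$, the assertion.

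This is where the equality hypothesis enters. For any $v\in\Z(X_1)$ one has $\|v\|_1=\sum_{y}\sum_d|v[d,y]|\ge\sum_y\bigl|\sum_d v[d,y]\bigr|=\|\pi_{X_1\to Y}(v)\|_1$, and equality holds iff for each $y$ all the numbers $v[d,y]$ (with $d$ varying) have the sign of $\pi_{X_1\to Y}(v)[y]$ (or all vanish). Applying this to $w_1$ and symmetrically to $w_2$, the assumed equalities $\|w_1\|_1=\|w_2\|_1=\|\omega\|_1$ force, for every $y$, both $w_1[\cdot,y]$ and $w_2[y,\cdot]$ to be sign-definite with common sign that of $\omega[y]$. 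Fix $y$; replacing all data by their negatives when $\omega[y]<0$, we must find a \emph{non-negative} integer matrix $N_y$ whose row sums are the prescribed non-negative integers $w_1[\cdot,y]$ and whose column sums are the prescribed non-negative integers $w_2[y,\cdot]$ (these have equal total $|\omega[y]|$). Such a matrix exists, e.g.\ by the greedy north-west-corner filling, and for it $\|N_y\|_1$ equals the sum of its entries, namely $|\omega[y]|$, as needed.

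It remains to assemble $z$ and $z'$ from their slices $M_y$ and $M_y-N_y$ and to record the identities $\pi_{X\to X_1}(z)=z_1$, $\pi_{X\to X_2}(z)=z_2$, $\pi_{X\to X_i}(z')=z_i'$, all holding slice-by-slice, together with the norm computation above. I expect the only real obstacle to be the middle step: converting the global equality of $\ell_1$-norms into the slicewise sign-definiteness of $w_1$ and $w_2$, and then realizing the difference $z-z'$ by matrices that require no cancellation, so that passing from the margins to the table does not inflate the $\ell_1$-norm. Everything else is routine bookkeeping with the coordinate description.
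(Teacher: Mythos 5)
Your proof is correct. The paper states Lemma~\ref{lm-glue-cutchange} without proof (as one of several ``straightforward observations''), and your argument --- slicing by the labeling of $Y$, realizing $z$ and $z-z'$ as integer transportation matrices with the prescribed margins, and using equality in the termwise triangle inequality to force each slice of $w_1$ and $w_2$ to be sign-definite so that $N_y$ can be taken without cancellation --- is precisely the argument the authors intend the reader to supply, with the equality hypothesis entering exactly where it must.
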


We first show that the Markov width of every series-parallel graph is at most four. In fact we prove a slightly stronger and more technical result.

\begin{theorem}
\label{thm-sp}
Let $G$ be a series-parallel graph with a vertex set $X$ and poles $u$ and $v$. If $z,z'\in\Z(X)$ are
two non-negative vectors with $\pi_G(z)=\pi_G(z')$, then there exist non-negative vectors $z_0,\ldots,z_{\lambda}\in\Z(X)$
such that
\begin{enumerate}
\item $z_0=z$, $z_\lambda=z'$,
\item $\pi_G(z)=\pi_G(z_k)$ for every $k=0,\ldots,\lambda$,
\item $||z_k-z_{k-1}||_1\le 8$ for every $k=1,\ldots,\lambda$, and
\item if $\pi_{uv}(z_{k-1})\not=\pi_{uv}(z_k)$, then $||z_k-z_{k-1}||_1=4$.
\end{enumerate}
\end{theorem}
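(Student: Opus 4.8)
The plan is to induct on a series-parallel decomposition of $G$, treating the base case and the two join operations separately. For the base case $G$ is a single edge $uv$, so $X=\{u,v\}$; here $\pi_G=\pi_{uv}$ is injective on the relevant data up to the standard $2\times 2$ swap, and the classical fact that $\mu$ of a forest is $2$ (or a direct one-line check) gives moves of $\ell_1$-norm exactly $4$ connecting $z$ to $z'$, each of which changes $\pi_{uv}$, so conditions (3) and (4) hold trivially. For the serial join, write $G$ as the serial join of $G_1$ (poles $u,w$) and $G_2$ (poles $w,v$), with $X_1=V(G_1)$, $X_2=V(G_2)$, $X_1\cap X_2=\{w\}$. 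Given $z,z'$ with $\pi_G(z)=\pi_G(z')$, I would first use Lemma~\ref{lm-glue-cutsame} (with $Y=\{w\}$) to replace $z$ by a vector agreeing with $z'$ on $G_1$ at the cost of moves controlled by the change on the cut $\{w\}$; then apply the inductive hypothesis inside $G_1$ and inside $G_2$ and splice the resulting sequences via Lemma~\ref{lm-glue-cutchange}, which is exactly designed to lift a pair of equal-norm moves on the two sides (with matching projection to the single cut-vertex $w$) to a move on all of $X$ of the same norm. The bookkeeping must ensure that whenever a spliced move changes $\pi_{uv}$ it came from a norm-$4$ move on one side that changed $\pi_{uw}$ or $\pi_{wv}$ — so the norm-$4$ clause is preserved; this is where conditions (4) of the two sub-instances feed directly into condition (4) of the conclusion.

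The parallel join is the substantive case. Suppose $G$ is the parallel join of $G_1,\ldots,G_m$ (its $\{u,v\}$-bridges), so $X_i=V(G_i)$ and $X_i\cap X_j=\{u,v\}$ for $i\neq j$, i.e. $Y=\{u,v\}$. The key observation is that $\pi_G(z)$ determines, and is determined by, the collection $\bigl(\pi_{G_i}(z_i)\bigr)_i$ where $z_i=\pi_{X\to X_i}(z)$, together with the common value $\pi_{uv}(z)$. The strategy is: (a) first move $z$ so that its projection to $\{u,v\}$ equals that of $z'$ — this is the phase where condition (4) is used, since changing $\pi_{uv}$ is only allowed by norm-$4$ moves, and one must show that the at most $\|\pi_{uv}(z)-\pi_{uv}(z')\|_1/2$ necessary adjustments can each be realized by a norm-$4$ move of $G$ (a swap living in one bridge plus possibly a path through another bridge) — then (b) with the cut-projection now fixed, work bridge by bridge: apply the inductive hypothesis to $G_i$ to transform $z_i$ into $z'_i$ through a sequence of moves each of norm $\le 8$ that now never change $\pi_{uv}$ (because source and target already agree on $\{u,v\}$, and by condition (4) any move that did change it would have norm $4$ — but we can reorganize so that the net effect on $\{u,v\}$ is zero throughout, or absorb such moves in pairs), and lift each to a move of $G$ supported on $X_i$ using Lemma~\ref{lm-glue-cutsame}/Lemma~\ref{lm-glue-cutchange} with the other bridges held fixed; finally (c) reconcile the remaining freedom — different bridges may have been left in configurations that are equal marginally but not equal as vectors — using Lemma~\ref{lm-glue-swaps}, whose norm-$4$ moves comfortably respect the bound $8$ and, crucially, do not change $\pi_{uv}$.

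The main obstacle I anticipate is phase (a): showing that the discrepancy on the cut pair $\{u,v\}$ can be resolved by a bounded sequence of moves each of which is \emph{either} norm $4$ (when it alters $\pi_{uv}$) \emph{or} norm $\le 8$, while keeping all intermediate vectors non-negative. The difficulty is that adjusting $\pi_{uv}$ forces a coordinated change across possibly all bridges, and naively this costs norm proportional to $m$; the way out is to use the fact that $G$ is a parallel join of \emph{at least} the bridges and to route the change through just one or two bridges at a time, peeling off one unit of $\pi_{uv}$-discrepancy per move — here is exactly where Lemma~\ref{lm-sp} (parallel join of at least three series-parallel graphs when $G$ is not a cycle) and the cycle base case (Lemma~\ref{lm-cycle}) get invoked, and where the non-negativity of the intermediate vectors has to be maintained by choosing, at each step, a bridge in which there is ``room'' to perform the swap. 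Once phase (a) is done the rest is a fairly mechanical gluing argument via the three gluing lemmas, with the norm bound $8$ arising precisely as $4+4$ when a move in one bridge is composed with a compensating move in another during the reconciliation step.
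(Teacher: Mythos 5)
Your overall architecture (induction on the series-parallel decomposition, the three gluing lemmas, the special role of condition (4)) matches the paper's, and your serial-join sketch is essentially the paper's argument. But the heart of the proof is the parallel join with $\pi_{uv}(z)\neq\pi_{uv}(z')$, and there your plan has a genuine gap. You propose to first equalize the $\{u,v\}$-marginal by ``routing the change through just one or two bridges at a time, peeling off one unit of $\pi_{uv}$-discrepancy per move.'' This cannot work: every $\{u,v\}$-bridge contains both poles, so any move that alters $\pi_{uv}(z)$ necessarily alters $\pi_{X\to X_i}(z)$ for \emph{every} bridge $G_i$, and each of these induced changes must separately preserve $\pi_{G_i}$. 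A $\pi_{uv}$-changing move therefore has to be a simultaneous, coordinated marginal-preserving swap in all pieces at once; it cannot be localized to one or two bridges. The paper's mechanism for producing such moves is precisely condition (4) of the inductive hypothesis: each piece's inductive sequence is guaranteed to contain norm-$4$ steps realizing each unit step $w_{r-1}\to w_r$ of the $\{u,v\}$-marginal, these steps are synchronized across the two pieces by padding the sequences, and Lemma~\ref{lm-glue-cutchange} lifts each synchronized pair to a single norm-$4$ move of $G$, with the ``edge present'' argument run between consecutive such moves. Your phase (a) never produces these coordinated moves; moreover Lemmas~\ref{lm-sp} and~\ref{lm-cycle}, which you invoke here, are actually needed elsewhere, namely for the degenerate case where $uv\in E(G)$ and the edge is one of only two $\{u,v\}$-bridges.

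A second, related gap is in your phase (b). Once the marginals agree you want to run the inductive hypothesis inside each bridge and lift each step with the other bridges held fixed via Lemma~\ref{lm-glue-cutsame}. That lemma requires the step not to change the cut-marginal $\pi_{uv}$, but the inductive hypothesis does not promise this: condition (4) only bounds the norm of $\pi_{uv}$-changing steps, it does not forbid them, even when the endpoints agree on $\{u,v\}$. Your suggested remedy (``reorganize so that the net effect on $\{u,v\}$ is zero throughout, or absorb such moves in pairs'') is unjustified and not obviously compatible with non-negativity and the marginal constraints. The paper's fix is to add the edge $uv$ to the piece before applying induction; condition 2 of the inductive statement then forces $\pi_{uv}$ to be constant along the inner sequence, and the lifting goes through. (A minor further slip: in the base case $X=\{u,v\}$ the map $\pi_{uv}$ is the identity on $\Z(X)$, so $\pi_G(z)=\pi_G(z')$ forces $z=z'$ and no moves are needed; there are no norm-$4$ moves that change $\pi_{uv}$ while preserving $\pi_G$.)
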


\begin{proof}
The proof proceeds by induction on the order of $G$, i.e., $|X|$.
If $|X|=2$, then the graph $G$ is an edge and thus $z=z'$. The claim
readily follows.

Assume now that $G$ is a graph of order at least three obtained from graphs $G_1$ and $G_2$ by a serial or
a parallel join. Let $X_i$ be the vertex set of $G_i$, $i=1,2$. We distinguish four cases:
\begin{itemize}
\item {\bf $G$ is obtained by a serial join of $G_1$ and $G_2$.}\\
      Let $w$ be the vertex shared by $G_1$ and $G_2$; by symmetry, we can assume that the pole $u$ is contained in $G_1$ and $v$ in $G_2$.
      We apply induction to $G_i$ with $z^i=\pi_{X_i}(z)$ and $z'^i=\pi_{X_i}(z')$, for $i=1,2$.
      Let, for $i=1,2$, $z^i_0,\ldots,z^i_\ell$ be the resulting sequences of vectors (note that by padding the sequences
      with $\pi_{X_i}(z')$ at the end, we can assume that the sequences have the same length).

      We construct the required sequence $z_0,\ldots,z_{\lambda}$ as follows. We start by constructing a sequence of vectors
      We have now constructed a sequence of vectors $z_0,\ldots,z_{\ell}$ such that $z_0=z$,
      $\pi_{X_1}(z_k)=\pi_{X_1}(z^1_k)$ and $\pi_{X_2}(z_k)=\pi_{X_2}(z)$ for $k=1,2,\ldots, \ell$, and Properties 2,3 and 4 from the lemma statement are satisfied. Let $k\in [1,\ell]$ and assume the vectors $z_0,\ldots,z_{k-1}$ have been defined.
      If $\pi_{uw}(z^1_{k-1})=\pi_{uw}(z^1_k)$, apply Lemma~\ref{lm-glue-cutsame} with $X'_1=X_1$, $X'_2=X_2\cup\{u\}$,
      $z=z_{k-1}$ and $\overline{z}=z^1_k$ and set $z_k$ to be the resulting vector $z'$.
      Clearly, $||z_k-z_{k-1}||_1=||z^1_{k}-\pi_{X_1}(z_{k-1})||_1=||z^1_k-z^1_{k-1}||_1\le 8$ and
      $\pi_{uvw}(z_{k-1})=\pi_{uvw}(z_k)$. Since $\pi_{G_1}(z^1_{k-1})=\pi_{G_1}(z^1_k)$,
      it follows $\pi_G(z_{k-1})=\pi_G(z_k)$. In particular, Properties 2, 3 and 4 are satisfied in this step.

      If $\pi_{uw}(z^1_{k-1})\not=\pi_{uw}(z^1_k)$, then $||z^1_k-z^1_{k-1}||_1=4$ by Property 4.
      By Lemma~\ref{lm-glue-cutsame} applied with $z=z_{k-1}$ and $\overline{z}=z^1_k$, there exists
      $z_k=z'$ such that $\pi_{X_1}(z_k)=z^1_k$, $\pi_{X_2}(z_k)=\pi_{X_2}(z^2_{k-1})=\pi_{X_2}(z)$ and
      $||z_k-z_{k-1}||_1=4$. Again, $\pi_{G_1}(z^1_{k-1})=\pi_{G_1}(z^1_k)$ implies that
      $\pi_G(z_{k-1})=\pi_G(z_k)$.
      Since $||z_k-z_{k-1}||_1=4$, Properties 2, 3 and 4 are also satisfied  in this step.

      We have now constructed a sequence $z_0,\ldots,z_{\ell}$ such that $z_0=z$,
      $\pi_{X_1}(z_{\ell})=\pi_{X_1}(z')$ and $\pi_{X_2}(z_{\ell})=\pi_{X_2}(z)$. An analogous
      argument yields the existence of a sequence $z_{\ell},\ldots,z_{2\ell}$ such that
      $\pi_{X_1}(z_{2\ell})=\pi_{X_1}(z')$, $\pi_{X_2}(z_{2\ell})=\pi_{X_2}(z')$ satisfying Properties 2, 3 and 4.
      By Lemma~\ref{lm-glue-swaps}, the sequence $z_0,\ldots,z_{2\ell}$ can be completed
      to a sequence $z_0,\ldots,z_{\lambda}$ such that $z_{\lambda}=z'$ and
      $||z_k-z_{k-1}||_1=4$ for $k=\ell+1,\ldots,\lambda$. Clearly, the resulting sequence
      has Property 2. Since the $\ell_1$-norm of the vectors $z_k-z_{k-1}$, $k>2\ell$, is four,
      the sequence $z_{2\ell},\ldots,z_{\lambda}$ has also Properties 3 and 4.
\item {\bf $G$ is obtained by a parallel join of $G_1$ and $G_2$, $uv$ is an edge of $G$, and $G$ has at least three $\{u,v\}$-bridges.}\\
      By permuting the order of the parallel joins in the series-parallel decomposition of $G$,
      we can assume that neither $G_1$ nor $G_2$ is an edge. By symmetry, $G_1$ contains the edge $uv$.
      Let $G'_1$ be $G_1$ and $G'_2$ be $G_2$ with the edge $uv$ added.

      We apply induction to $G'_i$, $z^i=\pi_{X_i}(z)$ and $z'^i=\pi_{X_i}(z')$, $i=1,2$.
      Let $z^i_0,\ldots,z^i_\ell$ be the resulting sequence of vectors (note that by padding the sequences
      with $\pi_{X_i}(z')$ at the end, we can assume the sequences to have the same length).
      By Lemma~\ref{lm-glue-cutsame}, there exist a sequence of vectors $z_0,\ldots,z_{2\ell}$ such that
      $\pi_{X_1}(z_i)=z^1_i$ and $\pi_{X_2}(z_i)=z^2_0$ for $i=0,\ldots,\ell$,
      $\pi_{X_1}(z_i)=z^1_\ell$ and $\pi_{X_2}(z_i)=z^2_{i-\ell}$ for $i=\ell+1,\ldots,2\ell$ and
      $||z_i-z_{i-1}||\le 8$ for $i=1,\ldots,2\ell$. Clearly, this sequence has Properties 2 and 3.

      Lemma~\ref{lm-glue-swaps} yields that there is a sequence $z_{2\ell},\ldots,z_\lambda$ such that
      $z_\lambda=z'$, $\pi_{X_i}(z_{2\ell})=\cdots=\pi_{X_i}(z')$ and $||z_k-z_{k-1}||_1\le 4$
      for $k=2\ell+1,\ldots,\lambda$.
      The presence of the edge $uv$ in $G$ implies that $\pi_{uv}(z_0)=\cdots=\pi_{uv}(z_{\lambda})$
      which yields Property 4.
\item {\bf $G$ is obtained by a parallel join of $G_1$ and $G_2$ and $uv$ is not an edge of $G$.}\\
      If $\pi_{uv}(z)=\pi_{uv}(z')$, add the edge $uv$ to $G$ and proceed as in the previous case.
      Hence, $\pi_{uv}(z)\not=\pi_{uv}(z')$. Observe that there exists a (unique) sequence of vectors
      $w_0,\ldots,w_m\in\ZZ(\{u,v\})$ such that
      $w_0=\pi_{uv}(z)$, $w_m=\pi_{uv}(z')$, $||w_r-w_{r-1}||_1=4$
      for $r=1,\ldots,m$ and $||\pi_{uv}(z')-\pi_{uv}(z)||_1=4m$.

      We now apply induction for $G_i$, $z^i=\pi_{X_i}(z)$ and $z'^i=\pi_{X_i}(z')$, $i=1,2$.
      Let $z^i_0,\ldots,z^i_{\ell_i}$ be the resulting sequence of vectors. By Property 4,
      there exist indices $k^i_r$ such that $\pi_{uv}(z^i_{k^i_r-1})=w_{r-1}$ and
      $\pi_{uv}(z^i_{k^i_r})=w_r$ for $r=1,\ldots,m$ and $i=1,2$. Since it is possible to prolong the sequence
      by repeating some of the vectors several times, we can assume that $k^1_r=k^2_r$;
      let $k_r$ be their common value in the rest.

      By Lemma~\ref{lm-glue-cutchange}, there exist vectors $z_1,\ldots,z_m$ and $z'_0,\ldots,z'_{m-1}$
      such that $\pi_{X_i}(z_r)=z^i_{k_r}$
      for $i=1,2$ and $r=1,\ldots,m$,
      $\pi_{X_i}(z'_{r-1})=z^i_{k_r-1}$ for $i=1,2$ and $r=1,\ldots,m$, and
      $||z_r-z'_{r-1}||_1=4$ for every $r=1,\ldots,m$.
      For convenience, set $z_0=z$ and $z'_m=z'$.

      Let $G'$ be the graph obtained from $G$ by adding the edge $uv$.
      The choice of the indices $k_r$ implies that $\pi_{G'}(z_r)=\pi_{G'}(z'_r)$ for $r=0,\ldots,m$.
      In particular, we can apply the argument used
      in the previous case for $G'$ with $z_0$ and $z'_0$, $G'$ with $z_1$ and $z'_1$, \dots,
      $G'$ with $z_r$ and $z'_r$ and concatenate the obtained sequences of vectors. Clearly,
      the final sequence has Properties 1, 2 and 3.
      Because of the presence of the edge $uv$ in $G'$, the mapping
      $\pi_{uv}$ is constant inside each of the $r$ sequences in the concatenation.
      Since $||z_r-z'_{r-1}||_1=4$ for $r=1,\ldots,m$, the resulting sequence also has Property 4.
\item {\bf $G$ is obtained by a parallel join of $G_1$ and $G_2$, $uv$ is an edge of $G$ and $G$ has only two $\{u,v\}$-bridges.}\\
      Clearly, one of the $\{u,v\}$-bridges is the edge $uv$.
      If $G$ is a cycle, then there exist vectors $z_0,\ldots,z_{\lambda}$ satisfying Properties 1, 2 and 3
      by Lemma~\ref{lm-cycle}. Since $uv$ is an edge, $\pi_{uv}(z_0)=\cdots=\pi_{uv}(z_\ell)$
      which implies that the sequence also satisfies Property 4.

      On the other hand, if $G$ is not a cycle,
      then Lemma~\ref{lm-sp} implies that $G$ has another series parallel decomposition, say with poles $u'$ and $v'$,
      such that $G$ has at least three $\{u',v'\}$-bridges. Based on whether $G$ contains the edge $u'v'$,
      we apply the arguments presented in the second case or the third case to obtain a sequence
      of vectors $z_0,\ldots,z_\lambda$ satisfying Properties 1, 2, 3 and 4
      with respect to $u'$ and $v'$.
      Since $uv$ is an edge of $G$ and thus $\pi_{uv}(z_0)=\cdots=\pi_{uv}(z_\lambda)$,
      Property 4 also holds with respect to $u$ and $v$.
\end{itemize}
\end{proof}

It is now easy to derive the main result of this section:

\begin{theorem}
\label{thm-main}
The Markov width $\mu(G)$ of a graph $G$ is at most four if and only if
$G$ does not contain $K_4$ as a minor. In particular, $\mu(G)=4$ if and only if
$G$ is not a forest and has no $K_4$ as a minor.
\end{theorem}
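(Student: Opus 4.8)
The plan is to prove the two implications of the equivalence separately and then read off the ``in particular'' clause from the known dichotomy. For the direction ``if $G$ has no $K_4$ minor then $\mu(G)\le 4$'', I would prove, by induction on the number of blocks of $G$, the following stronger statement: whenever $z,z'\in\Z(V(G))$ are non-negative with $\pi_G(z)=\pi_G(z')$, there exist non-negative vectors $z=z_0,z_1,\dots,z_\lambda=z'$ with $\pi_G(z_k)=\pi_G(z)$ for every $k$ and $\|z_k-z_{k-1}\|_1\le 8$ for every $k\ge 1$. Once this is shown, the finite set $B\subseteq\Ker(\pi_G)$ of all kernel vectors of $\ell_1$-norm at most $8$ is a Markov basis --- take $u_k=z_k-z_{k-1}$, which lies in $\Ker(\pi_G)$ and has norm at most $8$ --- so $\mu(G)\le 4$. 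In the base case $G$ has a single block, hence is $K_2$ or a $2$-connected $K_4$-minor-free graph; by Lemma~\ref{lm-sp} the latter is series-parallel, so in either case Properties 1--3 of Theorem~\ref{thm-sp} provide the required sequence.

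For the inductive step I would peel off a leaf block $H$ of the block-cut tree of a component of $G$, attached to the rest $G'$ at a single cut vertex $w$, so that $V(G')\cap V(H)=\{w\}$ and $E(G'),E(H)$ partition $E(G)$ (a disconnected $G$, and isolated vertices, are handled in the same way using the $Y=\emptyset$ conventions). Applying the induction hypothesis to $G'$ and to $H$ yields connecting sequences for $\pi_{V(G')}(z),\pi_{V(G')}(z')$ and for $\pi_{V(H)}(z),\pi_{V(H)}(z')$. The key observation is that the one-vertex marginal $\pi_{\{w\}}$ is constant along each of these sequences, because $w$ is incident with some edge $wx$ of $G'$ (resp.\ of $H$) and $\pi_{wx}$, hence $\pi_{\{w\}}$, is constant since $\pi_{G'}$ (resp.\ $\pi_H$) is. Therefore Lemma~\ref{lm-glue-cutsame} with $X_1=V(G')$, $X_2=V(H)$, $Y=\{w\}$ applies at each step and lets me lift the $G'$-sequence to $\Z(V(G))$ while holding the $V(H)$-coordinates fixed; doing the same with $G'$ and $H$ interchanged then reaches a vector agreeing with $z'$ on both $V(G')$ and $V(H)$; and Lemma~\ref{lm-glue-swaps} joins that vector to $z'$ by moves of norm $4$. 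All steps have norm at most $8$, all intermediate vectors are non-negative (Lemma~\ref{lm-glue-cutsame} keeps non-negative inputs non-negative, as is used implicitly throughout the proof of Theorem~\ref{thm-sp}, and Lemma~\ref{lm-glue-swaps} asserts this explicitly), and $\pi_G$ is constant along the whole sequence because every $\pi_e$ factors through $\pi_{V(G')}$ or $\pi_{V(H)}$. The only real work here is the bookkeeping needed to verify the hypothesis of Lemma~\ref{lm-glue-cutsame} at each step, which is exactly the constancy of $\pi_{\{w\}}$ just noted.

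For the converse, suppose $G$ contains $K_4$ as a minor; since $K_4$ is cubic this is equivalent to $G$ containing a subdivision of $K_4$ as a subgraph. I would combine two ingredients: the lower bound $\mu(K_n)\ge 2n-2$ of Develin and Sullivant, which gives $\mu(K_4)\ge 6$, and monotonicity of Markov width under topological minors, namely $\mu(G)\ge\mu(H)$ whenever $H$ is a topological minor of $G$. Together these yield $\mu(G)\ge\mu(K_4)\ge 6>4$. The point that would need a short separate argument (or a citation) is this monotonicity: that deleting a vertex does not increase $\mu$ and that subdividing an edge does not decrease it. I expect this, rather than anything on the series-parallel side, to be the main obstacle, since the hard analytic content of the ``$\le 4$'' direction is already packaged in Theorem~\ref{thm-sp} and the lemmas preceding it.

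Finally, the ``in particular'' clause follows at once from the known dichotomy $\mu(G)=2$ for forests and $\mu(G)\ge 4$ otherwise: if $G$ is not a forest and has no $K_4$ minor, then $4\le\mu(G)\le 4$, so $\mu(G)=4$.
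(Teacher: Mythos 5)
Your proof is correct and follows essentially the same route as the paper: Theorem~\ref{thm-sp} plus a reduction to blocks for the upper bound, and $\mu(K_4)\ge 6$ together with minor-monotonicity of $\mu$ (both available from Develin--Sullivant, which the paper cites for exactly these facts) for the lower bound. The only real difference is that you spell out, via Lemmas~\ref{lm-glue-cutsame} and~\ref{lm-glue-swaps}, the block-decomposition step that the paper dismisses as ``easy to see.''
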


\begin{proof}
It is shown in~\cite{bib-develin03+} that $\mu(K_4)=6$, and $\mu(H)\le\mu(H')$ if $H$ is obtained from $H'$ by a sequence of edge contractions and vertex deletions. Therrefore, the Markov width $\mu(G)$ of every graph $G$ containing $K_4$ as a minor
is at least six. Moreover, if $G$ is not a forest, then $\mu(G) \geq 4$
by Lemma~\ref{lm-cycle}.

On the other hand, it is easy to see that, the Markov width of a graph is the maximum
of the Markov widths of its blocks (maximal $2$-connected subgraphs). As every
$2$-connected $K_4$-minor-free graph is series-parallel (in case of a forest,
the blocks are single edges), the Markov width of every graph with no $K_4$ minor
is at most four by Theorem~\ref{thm-sp}.
\end{proof}

\section{Lower bound for complete graphs}

We will derive a lower bound on the Markov width of a complete graph
from bounds on maximum density of clean triangulations of surfaces.
A triangulation $T$ of a surface is {\em clean}
if every triangle of $T$ is a face triangle. A triangulation $T$ is
{\em $2$-face-colorable} if its faces can be colored with two colors
in such a way that every two adjacent faces receive distinct colors,
i.e., the dual graph of $T$ is bipartite. For other, more standard, definitions
related to triangulations of surfaces, we refer the reader
to the monograph~\cite{bib-mohar+}.

\begin{lemma}
\label{lm-lower}
If there exists a clean $2$-face-colorable triangulation $T$
with $n$ vertices and $m$ edges, then the Markov width $\mu(K_n)$
of the complete graph $K_n$ is at least $m/3$.
\end{lemma}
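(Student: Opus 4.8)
The plan is to exhibit, from a clean $2$-face-colorable triangulation $T$ with $n$ vertices and $m$ edges, two non-negative vectors in $\Z(V(K_n))$ that have the same image under $\pi_{K_n}$ but cannot be connected by small moves, thereby forcing $\mu(K_n)\ge m/3$. Since $T$ is $2$-face-colorable, write its faces as $F_{\mathrm{w}}\cup F_{\mathrm{b}}$ (white and black), where adjacent faces receive distinct colors; as every edge of a triangulation lies on exactly two faces and these are necessarily of opposite color, we get $|F_{\mathrm{w}}|=|F_{\mathrm{b}}|$, and by Euler-type face counting $3|F_{\mathrm{w}}|=3|F_{\mathrm{b}}|=m$, so each color class has exactly $m/3$ faces. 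Fix a reference binary labeling $a_0:V(K_n)\to\{0,1\}$ (say the all-zero one); for a triangle $\tau$ of $T$ with vertex set $\{i,j,k\}$, let $a_\tau$ be the labeling obtained from $a_0$ by flipping the values on $i,j,k$. Define
$$z \;=\; \sum_{\tau\in F_{\mathrm{w}}} e_{a_\tau} \;+\; \Big(m/3\Big)\,e_{a_0},\qquad
z' \;=\; \sum_{\tau\in F_{\mathrm{b}}} e_{a_\tau} \;+\; \Big(m/3\Big)\,e_{a_0},$$
each a non-negative integral vector of $\ell_1$-norm $2m/3$.

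The first key step is to verify that $\pi_{K_n}(z)=\pi_{K_n}(z')$, i.e.\ that $z-z'\in\Ker(\pi_{K_n})$. It suffices to check each $2$-way marginal $\pi_{ij}$, $ij\in E(K_n)$. For a pair $\{i,j\}$ that is \emph{not} an edge of $T$, no triangle of $T$ contains both $i$ and $j$, so each $a_\tau$ agrees with $a_0$ on $\{i,j\}$ for every $\tau\in F_{\mathrm{w}}\cup F_{\mathrm{b}}$, and $\pi_{ij}(z)=\pi_{ij}(z')=m\cdot e_{a_0|_{\{i,j\}}}$ trivially. For a pair $\{i,j\}$ that \emph{is} an edge of $T$, it lies on exactly two faces $\tau_{\mathrm w}\in F_{\mathrm w}$ and $\tau_{\mathrm b}\in F_{\mathrm b}$; in $a_{\tau_{\mathrm w}}$ and $a_{\tau_{\mathrm b}}$ both coordinates $i,j$ are flipped relative to $a_0$, while all other triangles of either color that contain $i$ or $j$ contain exactly one of them (since $T$ is a triangulation, the edges at $i$ that are not $ij$ give distinct triangles, etc.). A short count of how many white versus black triangles flip exactly $i$, exactly $j$, both, or neither — using that the link of each vertex is a cycle whose edges alternate in color — shows that the white and black contributions to $\pi_{ij}$ coincide. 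The cleanness hypothesis is what guarantees there are no "hidden" triangles of $T$ beyond the face triangles, so this bookkeeping is exactly the set of face triangles.

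The second key step, and the main obstacle, is the lower bound on move length: I claim that any $u\in\Ker(\pi_{K_n})$ with $e_{a_\tau}$ (for $\tau\in F_{\mathrm w}$) in its support and with $v_1+u$ non-negative along the way must have $\|u\|_1$ large, so that at some point in any valid sequence of moves from $z$ to $z'$ one is forced to use a move of $\ell_1$-norm at least $2m/3$. The idea is that the only non-negative vectors with the same $\pi_{K_n}$-image as $z$ and with $\ell_1$-norm $2m/3$ are $z$ and $z'$ themselves: any labeling $b$ appearing with positive coefficient must differ from $a_0$ on a set of vertices inducing a subgraph of $T$ in which every edge is "balanced" by the marginal constraints, and a parity/triangle argument forces that set to be a union of faces of one color — and then the marginal and norm constraints pin it down to the two extreme configurations. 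Hence the first move applied to $z$ either leaves $z$ unchanged or has norm $\ge 2m/3$, giving $2\mu(K_n)\ge 2m/3$, i.e.\ $\mu(K_n)\ge m/3$. Making the parity argument of this last step fully rigorous — characterizing which subsets of $V(K_n)$ can be "flipped" consistently with all the $\pi_{ij}$ — is where the real work lies, and it is where the $2$-face-colorability is used to rule out intermediate configurations.
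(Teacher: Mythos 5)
Your overall strategy matches the paper's: build two non-negative vectors from the two colour classes of faces, show that they are the only elements of their fiber, and conclude that any Markov basis must contain the single move joining them, which has $\ell_1$-norm $2m/3$. However, the step you yourself flag as ``where the real work lies'' --- showing that no third non-negative vector shares the same $2$-way marginals --- is exactly the content of the lemma, and your ``parity/triangle argument'' sketch does not contain the mechanism that makes it work. The actual argument is: write any fiber element as $\sum_i e_{\chi(A_i)}$ for subsets $A_i\subseteq[n]$; the marginals force each pair inside an $A_i$ to be an edge of $T$ and each edge of $T$ to lie in exactly one $A_i$, so each $A_i$ spans a clique of $T$; \emph{cleanness together with $2$-face-colourability} implies $T$ has no $K_4$ subgraph, so $|A_i|\le 3$; the vertex marginals fix $\sum_i|A_i|=m$, and since there are exactly $m/3$ sets this forces every $|A_i|=3$; cleanness is then used a second time to identify each triangle with a face, and the edge-disjoint face cover must be one of the two colour classes. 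Your sketch (sets ``forced to be a union of faces of one colour'') never invokes the $K_4$-exclusion or the size-counting, and without them there is no way to rule out, say, support sets of size $2$ or $\ge 4$. So the proposal has a genuine gap at its central step.

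A secondary, self-inflicted complication: padding both vectors with $(m/3)\,e_{a_0}$ is unnecessary and weakens the counting argument. With the paper's unpadded vectors the fiber elements decompose into exactly $m/3$ sets of total size $m$, each of size at most $3$, which immediately forces all sizes to equal $3$. With your padded vectors there are $2m/3$ sets of total size $m$, and the same conclusion requires the additional relation $\sum_i\binom{|A_i|}{2}=m$ (each edge of $T$ covered exactly once) to exclude sets of size $1$ or $2$; this can be repaired, but it is extra work for no benefit. The rest of your outline (the verification that $\pi_{K_n}(z)=\pi_{K_n}(z')$ via the alternating links, and the final inference $2\mu(K_n)\ge\|z-z'\|_1=2m/3$) is sound and agrees with the paper.
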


\begin{proof}
Assume that the faces of $T$ are colored with red and blue and
let $G=K_n$ be the complete graph on the same vertex set as $T$,
which we identify with the set $[n]$. Remember that $\Z([n])$ has
a basis $e_a$, where $a$ ranges over all functions from $[n]$ into $\{0,1\}$,
which could be considered as indicator functions of subsets of $[n]$.
Let $z^r \in \Z([n])$ be defined as the sum of vectors $e_{\chi(F)}$ over all red faces $F$ of $T$.
In particular, $||z^r||_1=m/3$. The vector $z^b\in\Z([n])$ is defined analogously with respect
to blue faces. It is not hard to see that $\pi_G(z^r)=\pi_G(z^b)$. Let $\Pi_G(z^r)=\{z \in \Z([n]) \; | \; \pi_G(z)=\pi_G(z_r)\}$
be the fiber of $\pi_G$ containing $z^r$. We will show that $\Pi_G(z^r)=\{z^r,z^b\}$ which will imply the statement
of the lemma by the definition of the Markov width.

Consider $z\in\Pi_G(z^r)$ and let $A_1,\ldots,A_{m/3}$ be (not necessarily distinct) subsets of $[n]$ such
that $$z_{\chi(A)}=\sum_{i=1}^{m/3}e_{\chi(A_i)}.$$
By considering $\pi_{ij}(z^r)=\pi_{ij}(z)$ for $i,j \in [n]$ we see that if $ij \in E(T)$
then $\{i,j\}$ belong to exactly one member of $(A_1,\ldots,A_{m/3})$ and, if $ij \not \in E(T)$ then $\{i,j\}$ belongs to no member of this family. It follows that no two sets $A_1$ share more than one elements, and that every $A_i$ forms a vertex set of a complete subgraph of $T$.  Since $T$ is a $2$-face-colorable clean triangulation, $T$ contains no subgraph isomorphic to $K_4$ and
consequently $|A_i|\le 3$. On the other hand, the sum of the sizes of $A_1,\ldots,A_{m/3}$ is independent on the choice of $z \in \Pi_G(z^r)$ and we have
$$\sum_{i=1}^{m/3}|A_i|=m.$$
This implies that $|A_1|=|A_2|=\ldots=|A_{m/3}|=3$.
Hence, each $A_i$ corresponds to a face of $T$. Therefore $A_1,\ldots,A_{m/3}$ is a collection of the vertex sets
pairwise non-adjacent faces of $T$, such that every edge of $T$ is contained in some face of the collection. There are only two such collections, namely the vertex sets of the red faces and of the blue faces of $T$.
In particular, $z\in\{z^r,z^b\}$, as claimed.
\end{proof}

We now apply Lemma~\ref{lm-lower} to obtain a lower bound
on $\mu(K_n)$. A simple triangulation satisfying the assumptions of Lemma~\ref{lm-lower}
is a double-wheel drawn on the sphere: It has $N+2$ vertices and
$3N$ edges. Hence, Lemma~\ref{lm-lower} gives us that the Markov
width of $K_n$ is at least $n-2$. To obtain a superlinear bound,
we need denser clean triangulations. Such triangulations of essentially optimal density were
constructed by Seress and Szab{\'o} in~\cite{bib-seress+}. For every $\varepsilon>0$ and for every sufficiently large integer $n$, they construct a clean triangulation $T_{n,\varepsilon}$ of some surface $\Sigma$
with $n$ vertices and $n^{2-\varepsilon}$ edges. The constructed triangulations are $3$-vertex-colorable.
If the corresponding surface $\Sigma$ is orientable then the triangulation
is $2$-face-colorable, as the clockwise orders of the vertex colors around adjacent faces are different.  If $\Sigma$ is non-orientable, it is possible to obtain a clean triangulation with $2n$ vertices and $2n^{2-\varepsilon}$ edges
which is $3$-vertex-colorable by considering an orientable $2$-cover of $\Sigma$ and a corresponding clean triangulation.

We can now infer from Lemma~\ref{lm-lower} the following:

\begin{theorem}
\label{thm-lower}
For every $\varepsilon>0$, there exists $n_0$ such that
$\mu(K_n)\ge n^{2-\varepsilon}$ for all $n\ge n_0$.
\end{theorem}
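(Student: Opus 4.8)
The plan is to feed the dense clean triangulations of Seress and Szab\'o~\cite{bib-seress+} into Lemma~\ref{lm-lower}, and then to pass from the resulting bound on special values of $n$ to all large $n$ using the fact, recalled in the proof of Theorem~\ref{thm-main}, that $\mu(K_m)\le\mu(K_n)$ whenever $m\le n$ (since $K_m$ is obtained from $K_n$ by deleting vertices). Fix $\varepsilon>0$ and set $\delta=\varepsilon/2$. For every sufficiently large integer $N$ the construction of~\cite{bib-seress+} produces a clean, $3$-vertex-colorable triangulation $T$ of some surface $\Sigma$ with $N$ vertices and at least $N^{2-\delta}$ edges.

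First I would handle orientability. If $\Sigma$ is orientable, then the clockwise cyclic order of the three colors around a face is well defined, takes one of two values, and is different for adjacent faces, so $T$ is $2$-face-colorable and Lemma~\ref{lm-lower} gives $\mu(K_N)\ge N^{2-\delta}/3$. If $\Sigma$ is non-orientable, I would instead lift $T$ to the orientable double cover of $\Sigma$: this lift is still clean, is $3$-vertex-colorable (pull back the colors), has $2N$ vertices and at least $2N^{2-\delta}$ edges, and now lives on an orientable surface, hence is $2$-face-colorable; Lemma~\ref{lm-lower} then gives $\mu(K_{2N})\ge 2N^{2-\delta}/3$. Thus there is an absolute constant $c>0$ such that for every sufficiently large $N$ one has $\mu(K_M)\ge c\,M^{2-\delta}$ for some $M\in\{N,2N\}$.

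Next I would remove the restriction to these special values. Given a large integer $n$, apply the previous step with $N=\lfloor n/2\rfloor$; in either case we obtain an integer $M$ with $\lfloor n/2\rfloor\le M\le n$ and $\mu(K_M)\ge c\,M^{2-\delta}$, so by minor-monotonicity $\mu(K_n)\ge\mu(K_M)\ge c\,M^{2-\delta}\ge c'\,n^{2-\delta}$ for some absolute constant $c'>0$ once $n$ is large enough. Finally, since $\delta<\varepsilon$ we have $c'\,n^{2-\delta}=c'\,n^{\varepsilon/2}\cdot n^{2-\varepsilon}$, and $c'\,n^{\varepsilon/2}\to\infty$, so choosing $n_0$ large enough (also to cover all the ``sufficiently large'' clauses above) gives $\mu(K_n)\ge n^{2-\varepsilon}$ for all $n\ge n_0$.

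All the real content sits in Lemma~\ref{lm-lower} and in the Seress--Szab\'o construction; the only places I expect to need any care are verifying that the orientable double cover inherits cleanness, $3$-vertex-colorability, and hence $2$-face-colorability, and the routine bookkeeping with floors and multiplicative constants needed to upgrade a statement about an infinite set of values of $n$ to a statement about all large $n$. I do not expect a genuine obstacle here.
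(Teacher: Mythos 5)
Your proposal is correct and follows essentially the same route as the paper: feed the Seress--Szab\'o clean triangulations into Lemma~\ref{lm-lower}, using the $3$-vertex-coloring to get $2$-face-colorability on orientable surfaces and passing to the orientable double cover otherwise. The only difference is that you spell out the bookkeeping the paper leaves implicit (absorbing the factor $1/3$ and the doubling of the vertex count via the $\varepsilon/2$ trick and minor-monotonicity of $\mu$), which is a welcome clarification rather than a departure.
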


\section{Final Remarks}

\begin{enumerate}
\item Our definition of binary graph models differs from the standard one. We do not consider $1$-way marginals corresponding to the vertices of the graph. This is a very minor distinction, as for graphs of minimum degree one the fibers of $\pi_G$ remain unchanged, and a binary graph model of a disconnected graph can be considered as a toric fiber product (see~\cite{bib-sullivant}) of the models corresponding to its components.

\item In~\cite{bib-sturmfels+} Sturmfels and Sullivant consider cut ideals of graphs. Those are binomial ideals which are closely related to binary graph models as pointed out in~\cite{bib-sturmfels+}. In particular, the Markov width $\mu(G)$ of a graph $G$ equals to the maximum degree of a binomial appearing in a minimal generating set of the cut ideal $I_{\hat G}$, where $\hat G$ is a graph obtained from $G$ by adding a universal vertex, that is a new vertex joined by an edge to every vertex of $G$.

Sturmfels and Sullivant conjecture that $I_G$ can be generated in degree four if and only if $G$ has no $K_5$ minor. Our result can be interpreted as a partial result towards this conjecture, verifying it for all graphs containing a universal vertex. Also let us mention that Engstr{\" o}m~\cite{bib-engstrom} recently proved another conjecture from~\cite{bib-sturmfels+} showing that a cut ideal $I_G$ can be generated in degree two if and only if $G$ has no $K_4$ minor.
\end{enumerate}

\section*{Acknowledgements}

The second author would like to thank Bernd Sturmfels for referring him to~\cite{bib-develin03+} and for inspiring discussions.

\end{document}